\newtheorem{theorem}{Theorem}
\newtheorem{lemma}[theorem]{Lemma}
\theoremstyle{definition}
\declaretheorem[style=definition,name=Remark,sibling=theorem]{remark}
\newcommand\testname{Acknowledgements}
\newenvironment{koszi}{%
    \small
    \begin{center}%
        {\bfseries \testname\vspace{-.5em}\vspace{0pt}}%
    \end{center}%
    \quotation}
    {\endquotation}
\newcommand{\eps}{\varepsilon}
\renewcommand{\(}{\left(}
\renewcommand{\)}{\right)}
\renewcommand{\o}{\overline}
\renewcommand{\c}{\cdot}
\newcommand{\zz}{\mathcal{Z}}
\newcommand{\summ}{\sum\limits}
\title{On the bipartite graph packing problem}
\author{B\'alint V\'as\'arhelyi\thanks{Szegedi Tudom\'anyegyetem, Bolyai Int\'ezet. Aradi v\'ertan\'uk tere 1., Szeged, 6720, Hungary, \url{mesti@math.u-szeged.hu}, Supported by T\'AMOP-4.2.2.B-15/1/KONV-2015-0006}}
\providecommand{\keywords}[1]{\textbf{\textit{Keywords:}} #1}
\begin{document}
\maketitle
\begin{abstract}
The graph packing problem is a well-known area in graph theory. We consider a bipartite version and give almost tight conditions on the packability of two bipartite sequences.
\end{abstract}

\keywords{graph packing, bipartite, degree sequence}

\section{Notation}

We consider only simple graphs. Throughout the paper we use common graph theory notations: $d_G(v)$ (or briefly, if $G$ is understood from the context, $d(v)$) is the degree of $v$ in $G$, and $\Delta(G)$ is the maximal and $\delta(G)$ is the minimal degree of $G$, and $e(X,Y)$ is the number of edges between $X$ and $Y$ for $X\cap Y = \emptyset$. For any function $f$ on $V$ let $f(X)=\summ_{v\in X}f(v)$ for every $X\subseteq V$. $\pi(G)$ is the degree sequence of $G$.

\section{Introduction}

Let $G$ and $H$ be two graphs on $n$ vertices. We say that $G$ and $H$ \emph{pack} if and only if $K_n$ contains edge-disjoint copies of $G$ and $H$ as subgraphs.

The graph packing problem can be formulated as an embedding problem, too. $G$ and $H$ pack if and only if $H$ is a subgraph of $\overline{G}$ ($H\subseteq \o G$).

A classical result is the theorem of Sauer and Spencer.

\begin{theorem}[Sauer, Spencer \cite{sauer}]
Let $G_1$ and $G_2$ be graphs on $n$ vertices with maximum degrees $\Delta_1$ and $\Delta_2$, respectively. If $\Delta_1\Delta_2<\frac{n}{2}$, then $G_1$ and $G_2$ pack.
\label{0:1}
\end{theorem}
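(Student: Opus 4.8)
The plan is to recast packing as the existence of a suitable permutation and then attack it with an extremal (minimum-overlap) argument. Put both graphs on the common vertex set $[n]$, keep $G_1$ fixed, and let a permutation $\pi$ of $[n]$ act on $G_2$; then $G_1$ and $G_2$ pack iff some $\pi$ satisfies $\pi(u)\pi(v)\notin E(G_1)$ for every $uv\in E(G_2)$. I would therefore study the overlap count $t(\pi)=|\{uv\in E(G_2):\pi(u)\pi(v)\in E(G_1)\}|$, choose a permutation $\pi$ minimizing $t$ over all $n!$ permutations, and try to prove $t(\pi)=0$. Assume instead that $t(\pi)\ge 1$, and fix an overlapping edge $x_0y_0\in E(G_2)$ with $\pi(x_0)\pi(y_0)\in E(G_1)$.

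The engine is a single transposition on the domain. For $c\in[n]$ set $\pi_c=\pi\circ(x_0\,c)$, which exchanges the images of $x_0$ and $c$ and leaves all other images fixed. The key structural point is locality: an edge $uv\in E(G_2)$ avoiding both $x_0$ and $c$ keeps its overlap status, and the edge $x_0c$ itself (if present) is unaffected because its two endpoints merely trade images; so $t(\pi_c)-t(\pi)$ is controlled entirely by the edges of $G_2$ incident to $x_0$ or to $c$. Writing out this difference, it equals the number of overlaps newly created at $x_0$ and at $c$, minus the number of $\pi$-overlaps previously present at $x_0$ and at $c$ (apart from the shared edge $x_0c$).

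The crux is an averaging estimate over the choice of $c$. Summing the created overlaps at $x_0$, each $G_2$-neighbour $u$ of $x_0$ contributes a new overlap only for those $c$ with $\pi(c)\pi(u)\in E(G_1)$, of which there are at most $\deg_{G_1}(\pi(u))\le\Delta_1$; since $x_0$ has at most $\Delta_2$ such neighbours, the total over all $c$ is at most $\Delta_1\Delta_2$, and a symmetric count bounds the overlaps created at $c$ by the same $\Delta_1\Delta_2$. On the other hand, because $x_0y_0$ is an overlap at $x_0$, every $\pi_c$ with $c\notin\{x_0,y_0\}$ loses at least one old overlap. Hence
\[
\sum_{c\notin\{x_0,y_0\}}\bigl(t(\pi_c)-t(\pi)\bigr)\ \le\ 2\Delta_1\Delta_2-(n-2).
\]
Under the hypothesis $\Delta_1\Delta_2<n/2$ this right-hand side is negative, so some $\pi_c$ has $t(\pi_c)<t(\pi)$, contradicting minimality; therefore $t(\pi)=0$ and the graphs pack.

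I expect the real difficulty to lie not in the idea but in the bookkeeping forced by the bijection constraint. Because a transposition moves two vertices at once, one must simultaneously control the overlaps freshly created at $x_0$ and at the displaced vertex $c$, keep the freshly created overlaps distinct from those already present, and handle the degenerate cases $c=y_0$ and $x_0c\in E(G_2)$ without double counting. Squeezing the two symmetric $\Delta_1\Delta_2$ contributions and the gain of at least one per $c$ into exactly the threshold $\Delta_1\Delta_2<n/2$ is where the hypothesis is consumed, and a slightly sharper accounting than the crude $n-2$ above may be needed to remove the additive slack.
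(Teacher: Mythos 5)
The paper only quotes this theorem from Sauer and Spencer and gives no proof of its own, so there is nothing internal to compare against; I assess your argument on its own terms. Your strategy --- take a permutation $\pi$ minimizing the number of overlapping edges, fix a conflict $x_0y_0$, and average the change in overlaps over all transpositions $(x_0\,c)$ --- is exactly the original Sauer--Spencer argument (usually phrased as counting ``links'', i.e.\ two-edge walks out of $x_0$ alternating between $G_1$ and the permuted $G_2$), and the individual counting steps you perform are sound.

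However, the final inequality as you state it does not close the argument, and this is precisely the slack you flag at the end. From $\Delta_1\Delta_2<n/2$ you only get $2\Delta_1\Delta_2\le n-1$ in integers, so $2\Delta_1\Delta_2-(n-2)\le 1$: the sum of the $n-2$ differences could be $+1$, which is consistent with $t(\pi_c)\ge t(\pi)$ for every $c$, and no contradiction follows. The missing saving of $2$ comes from the excluded value $c=x_0$, which is silently counted in both of your $\Delta_1\Delta_2$ bounds. Concretely: in the count of overlaps newly created at $x_0$, the neighbour $u=y_0$ contributes only those $c$ with $\pi(c)\in N_{G_1}(\pi(y_0))$; one of these is $c=x_0$ itself (because $\pi(x_0)\pi(y_0)\in E(G_1)$), and it is excluded from the sum, so this term is at most $\Delta_1-1$ and the first total is at most $\Delta_1\Delta_2-1$. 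Symmetrically, in the count of overlaps newly created at $c$, the vertex $v=y_0$ satisfies $\pi(y_0)\in N_{G_1}(\pi(x_0))$ and has $x_0$ among its $G_2$-neighbours, but the pair $(c,v)=(x_0,y_0)$ is again excluded, so that total is also at most $\Delta_1\Delta_2-1$. This sharpens your bound to $2\Delta_1\Delta_2-2-(n-2)=2\Delta_1\Delta_2-n<0$, some transposition strictly decreases $t$, and the contradiction with minimality is complete. With that one refinement your proof is correct.
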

Many questions in graph theory can be formulated as special packing problems, see \cite{kierstead}.
The main topic of the paper is a type of these packing questions, which is called degree sequence packing to be defined in the next section.
Some results in this field are similar to that of Sauer and Spencer (\autoref{0:1}).

The structure of the paper is as it follows. First, we define the degree sequence packing problem, and survey some results. Next, we state and prove our main result and also show that it is tight. In particular, we improve a bound given by Diemunsch et al.~\cite{ferrara} Finally, we consider some corollaries of our main theorem.

\section{Degree sequence packing}

\subsection{Graphic sequence packing}

Let $\pi = (d_1,\ldots,d_n)$ be a graphic sequence, which means that there is a simple graph $G$ with vertices $\{v_1,\ldots,v_n\}$ such that $d(v_i) = d_i$. We say that $G$ \emph{represents} $\pi$.

Havel \cite{havel} and Hakimi \cite{hakimi} gave a characterization of graphic sequences.

\begin{theorem} [Hakimi \cite{hakimi}]
Let $\pi = \{a_1,\ldots,a_n\}$ be a sequence of integers such that $n-1\geq a_1\geq \cdots\geq a_n\geq 0$. Then $\pi$ is graphic if and only if  by deleting any term $a_i$ and subtracting 1 from the first $a_i$ terms the remaining list is also graphic.
\end{theorem}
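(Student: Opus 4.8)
The plan is to prove the two implications separately; the reverse (sufficiency) direction is routine, while the forward (necessity) direction contains the real work. Throughout I write $\pi'$ for the reduced list obtained by deleting $a_i$ and subtracting $1$ from the $a_i$ largest of the remaining terms, and I fix a labelling $v_1,\dots,v_n$ with intended degrees $a_1,\dots,a_n$.

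For the \emph{if} direction, assume $\pi'$ is graphic and let $G'$ be a graph on the $n-1$ vertices $\{v_j : j\neq i\}$ realizing it. I would simply re-introduce the vertex $v_i$ and join it to exactly the $a_i$ vertices whose degrees were decremented when forming $\pi'$. Each such degree is raised back by $1$ to its original value, the remaining degrees are untouched, and $v_i$ receives degree $a_i$; hence the resulting graph realizes $\pi$, so $\pi$ is graphic.

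For the \emph{only if} direction, assume $\pi$ is graphic. The key claim is that among all realizations of $\pi$ there is one in which the neighbourhood $N(v_i)$ has the same degree multiset as the $a_i$ largest degrees among $\{a_j : j\neq i\}$; granting this, deleting $v_i$ from such a realization decreases precisely those $a_i$ largest degrees by $1$ and leaves the rest fixed, which is exactly $\pi'$, so $\pi'$ is graphic. To prove the claim I would take a realization $G$ maximizing the potential $\Phi = \summ_{u\in N(v_i)} d(u)$ and argue by contradiction via a $2$-switch (edge swap). If the degree multiset of $N(v_i)$ is not the top $a_i$, then letting $y\in N(v_i)$ be a neighbour of minimum degree, there must be a vertex $x\notin N(v_i)\cup\{v_i\}$ with $d(x)>d(y)$. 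From $d(x)>d(y)$ together with $v_i\in N(y)\setminus N(x)$ one gets $|N(x)\setminus N(y)|\geq 2$, so there is a vertex $w\in N(x)\setminus N(y)$ with $w\neq y$ (and automatically $w\neq v_i$, since $v_i\notin N(x)$). Replacing the edges $v_iy,\ xw$ by $v_ix,\ yw$ is then a legal $2$-switch: it preserves every degree but raises $\Phi$ by $d(x)-d(y)\geq 1$, contradicting maximality.

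I expect the heart of the proof to be this swapping step, and in particular checking that the swap is well defined: that the four vertices $v_i,x,y,w$ are distinct, that $v_ix$ and $yw$ really are non-edges while $v_iy$ and $xw$ are edges, and that a suitable $w$ can always be selected. This is exactly where choosing $y$ of minimum degree in $N(v_i)$ and demanding the strict inequality $d(x)>d(y)$ matters, as it simultaneously forces $N(x)\setminus N(y)$ to have at least two elements and makes $\Phi$ strictly increase. Since $\Phi$ is a bounded integer quantity, one may equivalently phrase the argument as repeatedly applying improving switches until none remains; termination is immediate from the strict increase. Finally I note that nothing in the argument uses that $a_i$ is the largest term, so the statement indeed holds for an arbitrary deleted term.
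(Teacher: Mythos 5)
Your argument is correct: the reconstruction step for the ``if'' direction and the degree-maximizing $2$-switch for the ``only if'' direction are both sound, and the counting $|N(x)\setminus N(y)| = |N(y)\setminus N(x)| + d(x)-d(y) \geq 2$ (using $v_i\in N(y)\setminus N(x)$) correctly guarantees a legal switch vertex $w$. The paper itself states this theorem only as a cited classical result of Havel and Hakimi and gives no proof, so there is nothing to compare against; your write-up is the standard proof of the (stronger, ``delete any term'') form of the Havel--Hakimi theorem, and your closing remark that the argument never uses maximality of the deleted term is exactly why that form holds.
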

Kleitman and Wang \cite{kleitman} extended this result to directed graphs.

Two graphic sequences $\pi_1$ and $\pi_2$ \textit{pack} if there are graphs $G_1$ and $G_2$ representing $\pi_1$ and $\pi_2$, respectively, such that $G_1$ and $G_2$ pack. Obviously, the order does not matter.

There is an alternative definition to the packability of two graphic sequences. $\pi_1$ and $\pi_2$ \textit{pack with a fixed order} if there are graphs $G=(V,E_1)$ and $H=(V,E_2)$ with $V(\{v_1,\ldots,v_n\})$ such that $d_G(v_i)=\pi_1(i)$ and $d_H(v_i)=\pi_2(i)$ for all $i=1,\ldots,n$.

A detailed study of degree sequence packing we refer to Chapter 3 of Seacrest's PhD Thesis \cite{seacrest}.

One of the first results in (unordered or fixed order) degree sequence packing is the Lov\'asz--Kundu Theorem \cite{lovasz2, kundu}.

\begin{theorem}[Kundu \cite{kundu}]
A graphic sequence $\pi = (d_1,\ldots,d_n)$ has a realization containing a $k$-regular subgraph if and only if $\pi - k = (d_1-k,\ldots,d_n-k)$ is graphic.
\end{theorem}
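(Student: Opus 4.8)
The plan is to prove the two directions separately, treating the $k$-regular subgraph as a spanning one (a $k$-factor), since otherwise deleting its edges would not lower every term of $\pi$ by exactly $k$. The forward direction is immediate: if $G$ realizes $\pi$ and contains a $k$-factor $R$, then deleting the edges of $R$ decreases the degree of every vertex by exactly $k$, so the resulting graph realizes $\pi - k = (d_1-k,\ldots,d_n-k)$, which is therefore graphic.

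For the converse I would recast the statement as a packing problem. Suppose $\pi - k$ is graphic and let $F$ be a realization on the vertex set $\{v_1,\ldots,v_n\}$ with $d_F(v_i) = d_i - k$. It suffices to find a $k$-factor $R$ that is edge-disjoint from $F$, i.e.\ $R \subseteq \o F$: then $F \cup R$ realizes $\pi$ and contains the $k$-factor $R$. Thus the goal becomes showing that some realization of $\pi - k$ packs with a $k$-regular graph, and the essential freedom is that I may choose which realization $F$ to work with.

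The method I would use is an extremal exchange argument. Among all pairs $(F,R)$ where $F$ realizes $\pi - k$ and $R \subseteq \o F$ satisfies $d_R(v) \le k$ for every $v$, pick one maximizing $e(R)$. If $R$ is $k$-regular we are done, so assume some vertex $u$ has $d_R(u) < k$; a parity/counting check (the target edge count $nk/2$ is fixed) shows a second deficient vertex must exist. Starting from $u$ I would build an alternating structure in $\o F$, alternating between prospective $R$-edges and non-$R$-edges, and, whenever it stalls at a vertex saturated in both $R$ and $F$, apply a degree-preserving $2$-switch to $F$ that trades an $F$-edge for a non-edge and thereby opens a slot in the complement $\o F$. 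Such a switch keeps $F$ a realization of $\pi - k$ while enlarging the room available for $R$, so the augmentation terminates in a strictly larger $R$, contradicting maximality.

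The hard part will be controlling this augmentation step cleanly: one must show that the alternating search, combined with the $2$-switches on $F$, can always be completed to an augmenting trail rather than becoming permanently blocked, which requires a careful case analysis on the parity of the trail and on vertices that are simultaneously saturated in $F$ and in $R$. This is exactly where the Tutte-type $k$-factor machinery does the work, and an alternative would be to verify the needed condition on $\o F$ via the deficiency form of Tutte's factor theorem rather than by hand; either route isolates the same combinatorial obstruction.
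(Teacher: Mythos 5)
The paper states Kundu's theorem as a known classical result (cited to \cite{kundu} and \cite{lovasz2}) and gives no proof of it, so there is nothing internal to compare your attempt against; it has to be judged on its own terms. Your reading of ``$k$-regular subgraph'' as ``spanning $k$-regular subgraph'' (a $k$-factor) is the right one --- otherwise the forward direction already fails, e.g.\ $(2,2,2,0)$ has a realization containing a triangle yet $(0,0,0,-2)$ is not graphic --- and your forward direction is correct and complete. The reformulation of the converse as packing some realization $F$ of $\pi-k$ with a $k$-regular graph, with the essential freedom of varying $F$ over all realizations, is also the correct framing.

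The converse, however, is where all the content of the theorem lives, and there your argument has a genuine gap at its center. You set up the extremal pair $(F,R)$ maximizing $e(R)$ and propose to augment via alternating trails in $\overline{F}$ interleaved with degree-preserving $2$-switches on $F$, but you then explicitly defer the crux --- showing the augmentation can always be completed rather than becoming permanently blocked --- to ``Tutte-type $k$-factor machinery'' or to checking the deficiency form of Tutte's factor theorem on $\overline{F}$. That step is not routine: one must derive the $f$-factor condition (or an Erd\H{o}s--Gallai-type inequality) for $\overline{F}$ using nothing but the graphicality of $\pi$ and $\pi-k$, and simultaneously show that the unblocking $2$-switches always exist and that the combined process terminates; this is precisely what the published proofs of Kundu and Lov\'asz spend their effort on, and it is the part you have not supplied. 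A smaller slip: from $2e(R)<nk$ and parity you may only conclude that the total deficiency is at least $2$, which could be concentrated on a single vertex deficient by $2$, so ``a second deficient vertex must exist'' is not justified as stated (though the augmentation can be set up to start and end at one doubly deficient vertex). As it stands the proposal identifies a plausible skeleton but does not prove the theorem.
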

Though we use the first definition, we give a result for the latter. Let $\Delta_i = \Delta(\pi_i)$ the largest degree and $\delta_i = \delta(\pi_i)$ the smallest degree of $\pi_i$ for $i=1,2$.

Busch et al.~\cite{busch} gave a condition for the packability of two graphic sequences with a fixed order. By $\pi_1+\pi_2$ they mean the vector sum of (the ordered) $\pi_1$ and $\pi_2$. 

\begin{theorem}[Busch et al. \cite{busch}]
Let $\pi_1$ and $\pi_2$ be graphic sequences of length $n$ with $\Delta = \Delta(\pi_1 + \pi_2)$ and $\delta = \delta(\pi_1+\pi_2)$. If $\Delta\leq \sqrt{2\delta n} - (\delta-1)$, then $\pi_1$ and $\pi_2$ pack with a fixed oreder. When $\delta = 1$, strict inequality is required.
\end{theorem}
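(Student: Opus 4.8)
The plan is to prove the statement by the \emph{edge-switching} method: work with concrete realizations on a common labelled vertex set and repeatedly destroy shared edges by degree-preserving $2$-switches until none remain. First I would make fixed-order packing concrete. Let $V=\{v_1,\dots,v_n\}$, and let $G_1$ realize $\pi_1$ and $G_2$ realize $\pi_2$ with $d_{G_j}(v_i)=\pi_j(i)$; such realizations exist because $\pi_1,\pi_2$ are graphic. The pair packs with a fixed order exactly when $G_1,G_2$ can be chosen with $E(G_1)\cap E(G_2)=\emptyset$. So I would take, among all realizing pairs $(G_1,G_2)$, one minimizing the number of shared edges $b=|E(G_1)\cap E(G_2)|$, and argue that $b=0$. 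Write $d_j=d_{G_j}$, and note $\delta\le d_1(v)+d_2(v)\le\Delta$ for every $v$.

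The heart of the argument is the reduction step. Assume $b>0$ and fix a shared edge $xy$. I want a $2$-switch inside one of the graphs, say $G_1$, that deletes $xy$ from $G_1$ (it survives in $G_2$) \emph{without} creating a new shared edge; this lowers $b$ and contradicts minimality. A $2$-switch uses a second edge $zw\in E(G_1)$ with $z,w\notin\{x,y\}$ and replaces $\{xy,zw\}$ by either $\{xz,yw\}$ or $\{xw,yz\}$, preserving all degrees. For $N(v):=N_{G_1}(v)\cup N_{G_2}(v)\cup\{v\}$, the switch $\{xy,zw\}\to\{xz,yw\}$ is legal and conflict-free precisely when $z\notin N(x)$ and $w\notin N(y)$ (so that $xz,yw\notin E(G_1)\cup E(G_2)$), and symmetrically for the other orientation. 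Hence the ordered edges $(z,w)$ of $G_1$ that obstruct \emph{both} orientations number at most $\summ_{z\in N(x)}d_1(z)+\summ_{w\in N(y)}d_1(w)$, while the number of available ordered edges is $2|E(G_1)|$. I would then show the available count strictly exceeds the obstructed count, producing the desired switch.

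The counting, together with the boundary case, is where the work lies. Since $\summ_v(d_1(v)+d_2(v))\ge\delta n$, we have $|E(G_1)|+|E(G_2)|\ge \delta n/2$, so at least one of the two graphs carries enough edges to switch in; exploiting the freedom to delete $xy$ from whichever of $G_1,G_2$ is more favourable, and to use either switch orientation, is essential. The main obstacle is making this estimate \emph{tight}: a naive bound on the obstructions of the shape $|N(x)|\c\Delta+|N(y)|\c\Delta\approx 2\Delta^2$ only yields $\Delta\lesssim\sqrt{\delta n}$ with the wrong constant, so one must account carefully for the double-counting caused by $xy\in N(x)\cap N(y)$, redistribute the two sums across $G_1$ and $G_2$ using the global edge bound $\delta n/2$, and optimize the split; it is precisely this calibration that produces the exponent-free threshold $\Delta\le\sqrt{2\delta n}-(\delta-1)$. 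Finally, when $\delta=1$ some vertex has combined degree $1$, leaving essentially no local room to switch there, which is exactly why strict inequality is forced in that case; in the tight regime a single $2$-switch may fail, and I would then resolve $xy$ by a short alternating sequence of switches (an intermediate, conflict-non-increasing switch followed by the reducing one), showing that the same degree budget still guarantees such a sequence.
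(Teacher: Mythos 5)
The paper offers no proof of this statement: it is quoted verbatim as background from Busch et al.~\cite{busch}, so there is nothing internal to compare against and your argument has to stand on its own. Its overall strategy --- fix realizations on a common labelled vertex set minimizing the number of shared edges, then destroy a shared edge by a degree-preserving $2$-switch that creates no new conflict --- is indeed the right family of ideas and is essentially the switching framework behind the original result. But the decisive step, showing that the $2|E(G_1)|$ available ordered edges strictly outnumber the obstructed ones, is never carried out. With your own bound, the obstructions number at most $\sum_{z\in N(x)}d_1(z)+\sum_{w\in N(y)}d_1(w)\leq 2(\Delta+1)\Delta$, while the only lower bound you have on edges is $|E(G_1)|+|E(G_2)|\geq\delta n/2$, so the better of the two graphs is only guaranteed $\delta n/4$ edges; this yields a threshold of roughly $\Delta\leq\tfrac12\sqrt{\delta n}$, a factor of $2\sqrt{2}$ short of the claimed $\sqrt{2\delta n}-(\delta-1)$. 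You acknowledge exactly this ("the main obstacle is making this estimate tight"), but the proposed remedies --- accounting for double-counting at $xy$, redistributing the two sums between $G_1$ and $G_2$, optimizing the split --- are named rather than performed, and it is not at all clear they can recover a constant factor of $2\sqrt{2}$. The precise shape of the hypothesis, equivalent to $(\Delta+\delta-1)^2\leq 2\delta n$, signals that the correct count is organized differently (it exploits that \emph{every} vertex contributes at least $\delta$ to the degree sum, not merely the aggregate edge bound), so the calibration you defer is not a routine tightening but the actual content of the theorem.

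The boundary case is likewise unresolved. For $\delta=1$ you assert that a failed single switch can be repaired by "a short alternating sequence of switches" beginning with an "intermediate, conflict-non-increasing switch," but you give no construction, no invariant, and no termination argument; since $\delta=1$ is precisely the case the statement singles out by demanding strict inequality, it cannot be waved away. As written, the proposal is a plausible plan whose two hardest steps --- the sharp count and the tight regime --- are missing, so it does not yet constitute a proof.
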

Diemunsch et al.~\cite{ferrara} showed a condition for (unordered) graphic sequences.

\begin{theorem}[Diemunsch et al. \cite{ferrara}]\label{0:2}
Let $\pi_1$ and $\pi_2$ be graphic sequences of length $n$ with $\Delta_2 \geq\Delta_1$ and $\delta_1\geq 1$.

If 
\begin{equation}
\left\{\begin{array}{rl}
(\Delta_2+1)(\Delta_1+\delta_1) \leq \delta_1 n + 1,&when~\Delta_2+2\geq \Delta_1+\delta_1,~and\\
\dfrac{(\Delta_2+1+\Delta_1+\delta_1)^2}{4}\leq \delta_1n+1,&when~\Delta_2+2< \Delta_1+\delta_1,
\end{array}\right.
\end{equation} then $\pi_1$ and $\pi_2$ pack.

\end{theorem}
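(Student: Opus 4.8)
The plan is to use the embedding reformulation together with an extremal exchange argument. Recall that $\pi_1$ and $\pi_2$ pack precisely when there are realizations $G_1$ of $\pi_1$ and $G_2$ of $\pi_2$ on a common vertex set $V=\{v_1,\dots,v_n\}$ with $E(G_1)\cap E(G_2)=\emptyset$. I would therefore fix $V$ and, among all pairs $(G_1,G_2)$ of realizations on $V$, choose one minimizing the number of shared edges $t=|E(G_1)\cap E(G_2)|$. Assuming $t\ge 1$ for contradiction, I pick a conflict edge $uv\in E(G_1)\cap E(G_2)$ and aim to modify $G_1$ by a single $2$-switch that destroys $uv$ while creating no new conflict, contradicting minimality. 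Since $\Delta_2\ge\Delta_1$, the graph $G_1$ has the smaller maximum degree and is the natural place to reroute.

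The engine is the $2$-switch: choosing an edge $xy\in E(G_1)$ with $x,y\notin\{u,v\}$ and replacing $\{uv,xy\}$ by $\{ux,vy\}$ preserves $\pi_1$ as long as $ux,vy\notin E(G_1)$, and it strictly decreases $t$ provided in addition $ux,vy\notin E(G_2)$. Call a vertex $w$ \emph{free for $u$} if $uw\notin E(G_1)\cup E(G_2)$ and $w\ne v$; there are at least $n-2-d_{G_1}(u)-d_{G_2}(u)$ of these, and analogously for $v$. A usable swap partner is an edge $xy\in E(G_1)$ with $x$ free for $u$ and $y$ free for $v$. I would bound the number of \emph{unusable} edges by charging each to a non-free endpoint: an edge can fail only if one end lies in the non-free set for $u$ (of size at most $d_{G_1}(u)+d_{G_2}(u)$) or in the non-free set for $v$ (of size at most $d_{G_1}(v)+d_{G_2}(v)$), and each such vertex is incident to at most $\Delta_1$ edges of $G_1$. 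Comparing this product-type estimate against the available edges of $G_1$ yields a sufficient numerical condition for a usable partner to exist.

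Two features of the statement should emerge from this count. The minimum degree $\delta_1$ enters because the room for rerouting in $G_1$ is governed not only by $\Delta_1$ but by how tightly edges cluster around low-degree vertices; exploiting a minimum-degree vertex replaces a crude factor of $\Delta_1$ by one controlled by $\delta_1$, which upgrades the Sauer--Spencer-type product $\Delta_1\Delta_2$ into the $\delta_1 n$ on the right. The two displayed cases arise from an optimization: the obstruction count has the form of a product of two terms depending on the $G_1$-degree $d$ of the conflict vertex, with constant sum $\Delta_2+1+\Delta_1+\delta_1$, and to guarantee a repair for every possible conflict one must bound this product in the worst case as $d$ ranges up to $\Delta_1$. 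At the endpoint $d=\Delta_1$ the two terms specialize to $\Delta_1+\delta_1$ and $\Delta_2+1$, giving the bilinear bound $(\Delta_2+1)(\Delta_1+\delta_1)\le\delta_1 n+1$; by AM--GM the product is instead maximized in the interior, at equal terms, giving $\frac{(\Delta_2+1+\Delta_1+\delta_1)^2}{4}\le\delta_1 n+1$, and the threshold separating the regimes is exactly the comparison of $\Delta_2+2$ with $\Delta_1+\delta_1$.

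The hard part will be making the single-switch count go through at the boundary of the feasible region, where all relevant degrees are simultaneously extremal and a lone $2$-switch may admit no valid partner. There I anticipate needing either a length-two chain of switches or a switch performed in $G_2$ rather than $G_1$, together with an argument that the total number of conflicts strictly decreases rather than merely relocating. Controlling the double counting of edges blocked at both $u$ and $v$, and ensuring that no repair reintroduces a previously eliminated conflict, is where the hypotheses $\Delta_2\ge\Delta_1$ and $\delta_1\ge 1$ (so that no vertex of $G_1$ is isolated and the minimum-degree gain is genuine) must be used sharply; once the worst-case count is verified, the two inequalities in the statement fall out of the optimization above.
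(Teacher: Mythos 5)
This statement is quoted from Diemunsch et al.~\cite{ferrara} and the paper gives no proof of it, so there is no internal argument to compare against; your proposal has to stand on its own, and as written it does not. It is a plan in the right genre (conflict-minimizing realizations plus degree-preserving $2$-switches, which is indeed the standard engine for degree-sequence packing results of this type), but the one step that actually produces the stated inequalities is missing. Your charging scheme bounds the unusable edges by $\bigl(d_{G_1}(u)+d_{G_2}(u)+d_{G_1}(v)+d_{G_2}(v)\bigr)\Delta_1\le 2(\Delta_1+\Delta_2)\Delta_1$ and compares this with the at least $\delta_1 n/2$ edges of $G_1$; that yields a condition of the shape $4\Delta_1(\Delta_1+\Delta_2)<\delta_1 n$, not $(\Delta_2+1)(\Delta_1+\delta_1)\le\delta_1 n+1$. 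The factor $\Delta_1+\delta_1$ cannot come out of this count: it requires a genuinely different accounting in which the rerouting is anchored at a vertex of small $G_1$-degree (or the conflict edge is chosen so that one endpoint has $G_1$-degree near $\delta_1$), and you only gesture at this (``exploiting a minimum-degree vertex replaces a crude factor'') without giving the mechanism. Your AM--GM explanation of the two cases is a plausible post hoc reading of the statement, but since the underlying product whose factors sum to $\Delta_2+1+\Delta_1+\delta_1$ is never actually derived, the case split is not established either.

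Beyond the missing count, you concede the decisive difficulty yourself: at the extremal configurations ``a lone $2$-switch may admit no valid partner,'' and you defer to ``either a length-two chain of switches or a switch performed in $G_2$'' without specifying a potential function that strictly decreases under such compound moves. Since the whole argument is a minimal-counterexample induction on the number of shared edges, leaving open whether the repair step always exists and always strictly reduces $t$ means the proof has no base to stand on. Two smaller issues: your usability test considers only the orientation $\{ux,vy\}$, whereas an edge $xy$ should count as usable if either orientation $\{ux,vy\}$ or $\{uy,vx\}$ works (ignoring this loses a factor you cannot afford at the stated sharp constants); and the hypothesis $\delta_1\ge 1$ is used only to assert $e(G_1)\ge \delta_1 n/2$, which is too weak to recover the $\delta_1 n+1$ on the right-hand side in the form claimed. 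To complete this you would need to consult or reconstruct the actual exchange argument of Diemunsch et al., in which the quantity being minimized and the location of the switch are chosen so that exactly the two displayed inequalities emerge.
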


\subsection{Bipartite packing}

We study the bipartite packing problem as it is formulated by Catlin \cite{catlin}, Hajnal and Szegedy \cite{hajnal} and was used by Hajnal for proving deep results in complexity theory of decision trees \cite{hajnal2}.

Let $G_1 = (A,B;E_1)$ and $G_2 = (S,T;E_2)$ bipartite graphs with $|A|=|S|=m$ and $|B|=|T|=n$. They pack in the bipartite sense (i.e. they have a \textit{bipartite packing}) if there are edge-disjoint copies of $G_1$ and $G_2$ in $K_{m,n}$. 

Let us define the \textit{bigraphic sequence packing problem}.
We say that a sequence $\pi = (a_1,\ldots,a_m,b_1,\ldots,b_n)$ is \emph{bigraphic}, if $\pi$ is the degree sequence of a bipartite graph $G$ with vertex class sizes $m$ and $n$, respectively \cite{west}.

Two bigraphic sequences $\pi_1$ and $\pi_2$ without a fixed order \textit{pack}, if there are edge-disjoint bipartite graphs $G_1$ and $G_2$ with degree sequences $\pi_1$ and $\pi_2$, respectively, such that $G_1$ and $G_2$ pack in the bipartite sense.

Similarly to general graphic sequences, we can also define the packing with a fixed order.

Diemunsch et al.~\cite{ferrara} show the following for bigraphic sequences:

\begin{theorem}[Diemunsch et al. \cite{ferrara}]
\label{0:3}
Let $\pi_1$ and $\pi_2$ be bigraphic sequences with classes of size $r$ and $s$. Let $\Delta_1\leq \Delta_2$ and $\delta_1\geq 1$. If 
\begin{equation}
\Delta_1\Delta_2\leq\frac{r+s}{4},
\end{equation}
 then $\pi_1$ and $\pi_2$ pack.
\end{theorem}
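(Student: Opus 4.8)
The plan is to prove the statement in the slightly stronger and cleaner form ``$2\Delta_1\Delta_2\le\max(r,s)$ suffices'', from which the stated bound follows because $\Delta_1\Delta_2\le\frac{r+s}{4}$ gives $2\Delta_1\Delta_2\le\frac{r+s}{2}\le\max(r,s)$. I would use a local-switching (Sauer--Spencer type) argument. Fix arbitrary realizations $G_1$ of $\pi_1$ and $G_2$ of $\pi_2$, and regard both as spanning subgraphs of $K_{r,s}$ on one common pair of classes $A$ (of size $r$) and $B$ (of size $s$); by the symmetry of the two classes I may assume $r\ge s$. A bipartite packing is then exactly a relabelling (placement) of $G_2$ on $A\cup B$ sharing no edge with $G_1$, so the plan is to pick, among all placements of $G_2$, one minimising $t:=|E(G_1)\cap E(G_2)|$ and to show $t=0$. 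Assume for contradiction $t\ge 1$ and fix a common edge $ab$ with $a\in A$, $b\in B$. Note this in fact proves more: \emph{any} two realizations pack, which a fortiori yields the (unordered) packing asserted.

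The switch I use is the transposition of two vertices $a,a'$ of the \emph{larger} class $A$ inside the placement of $G_2$; this keeps $G_2$ a valid copy of $\pi_2$ but interchanges the $G_2$-neighbourhoods $N_2(a)$ and $N_2(a')$. Writing $N_i(\cdot)$ and $d_i(\cdot)$ for neighbourhoods and degrees in $G_i$, the resulting change in the number of common edges is exactly
\[
\Delta t(a')=|N_2(a')\cap N_1(a)|+|N_2(a)\cap N_1(a')|-|N_2(a)\cap N_1(a)|-|N_2(a')\cap N_1(a')|.
\]
The two subtracted terms are the collisions removed at $a$ and $a'$; they are nonnegative, and crucially $|N_2(a)\cap N_1(a)|\ge 1$ since $b$ lies in both $N_2(a)$ and $N_1(a)$.

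The heart of the argument is to average $\Delta t(a')$ over $a'\in A\setminus\{a\}$ and show the average is negative, which forces some switch to decrease $t$ and contradicts minimality. Here two double counts do the work:
\[
\summ_{a'\in A}|N_2(a')\cap N_1(a)|=\summ_{c\in N_1(a)}d_2(c)\le\Delta_1\Delta_2,\qquad \summ_{a'\in A}|N_2(a)\cap N_1(a')|=\summ_{c\in N_2(a)}d_1(c)\le\Delta_1\Delta_2,
\]
using $|N_1(a)|\le\Delta_1$, $|N_2(a)|\le\Delta_2$ and that every $B$-vertex $c$ satisfies $d_1(c)\le\Delta_1$, $d_2(c)\le\Delta_2$. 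Removing the $a'=a$ contributions (each $\ge 1$) and bounding the removed-collision term by $|N_2(a)\cap N_1(a)|\ge 1$ for each of the $r-1$ choices, I would obtain $\summ_{a'\ne a}\Delta t(a')\le 2\Delta_1\Delta_2-r-1$. Under $2\Delta_1\Delta_2\le r$ this is at most $-1<0$, so some $a'$ has $\Delta t(a')\le -1$; the corresponding switch strictly lowers $t$, contradicting minimality. Hence $t=0$ and $G_1,G_2$ pack.

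The step I expect to be the main obstacle is precisely this averaging. A naive union bound over edge pairs is hopelessly weak here---it would only give $|E(G_1)|\,|E(G_2)|<rs$---so the entire strength of the bound comes from charging each candidate switch to the two double counts $\summ_{c\in N_1(a)}d_2(c)$ and $\summ_{c\in N_2(a)}d_1(c)$, and from recovering the extra $+1$ by exploiting that the chosen edge $ab$ is itself one of the collisions that gets removed. Finally, note that this approach uses neither $\delta_1\ge 1$ nor $\Delta_1\le\Delta_2$, so those hypotheses are inessential for it.
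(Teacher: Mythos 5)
Your argument is correct, but there is nothing in the paper to compare it against: \autoref{0:3} is quoted from Diemunsch et al.~\cite{ferrara} without proof, so the paper supplies no argument of its own here. What you give is the natural bipartite adaptation of the Sauer--Spencer switching proof of \autoref{0:1}: the change-in-collisions formula for transposing $a,a'$ inside the larger class is exact, the two double counts $\summ_{a'}|N_2(a')\cap N_1(a)|=\summ_{c\in N_1(a)}d_2(c)\leq\Delta_1\Delta_2$ and $\summ_{a'}|N_2(a)\cap N_1(a')|=\summ_{c\in N_2(a)}d_1(c)\leq\Delta_1\Delta_2$ are right, and subtracting the $a'=a$ contributions together with the $(r-1)$ copies of $|N_2(a)\cap N_1(a)|\geq 1$ does give $\summ_{a'\neq a}\Delta t(a')\leq 2\Delta_1\Delta_2-r-1\leq -1$ under $2\Delta_1\Delta_2\leq r$. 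The only point worth making explicit is the degenerate case: the averaging needs $A\setminus\{a\}\neq\emptyset$, which holds because $t\geq 1$ forces $\Delta_1,\Delta_2\geq 1$ and hence $r\geq 2\Delta_1\Delta_2\geq 2$. Your version is in fact stronger than the stated theorem in three ways: the hypothesis $2\Delta_1\Delta_2\leq\max(r,s)$ is weaker than $\Delta_1\Delta_2\leq\frac{r+s}{4}$ (since $\frac{r+s}{2}\leq\max(r,s)$), the conclusion is that \emph{every} pair of realizations packs rather than some pair, and neither $\delta_1\geq 1$ nor $\Delta_1\leq\Delta_2$ is used. This is consistent with, and cleaner than, the citation; it does not, however, bear on the paper's own machinery (\autoref{4} and \autoref{5}), which is of Gale--Ryser type rather than switching type.
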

The following lemma, formulated by Gale \cite{gale} and Ryser \cite{ryser}, will be useful. We present the lemma in the form as discussed in Lov\'asz, Exercise 16 of Chapter 7 \cite{lovasz}.

\begin{lemma}[Lov\'asz \cite{lovasz}]
\label{4}
	Let $G$ be a bipartite graph and $\pi$ a bigraphic sequence on $(A, B)$.
\begin{equation}
	\pi(X) \leq  e_G(X,Y) + \pi(\overline{Y})~~~\forall X\subseteq A,~\forall Y\subseteq B,
\end{equation}
   then $\pi$ can be embedded into $G$ with a fixed order.
\end{lemma}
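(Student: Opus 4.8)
The plan is to translate the embedding question into a maximum-flow problem and then recognise the hypothesis as precisely the min-cut condition. First I would build an auxiliary directed network $N$: introduce a source $s$ and a sink $t$, add an arc $s\to a$ of capacity $\pi(a)$ for every $a\in A$, an arc $b\to t$ of capacity $\pi(b)$ for every $b\in B$, and for every edge $\{a,b\}\in E(G)$ a unit-capacity arc $a\to b$. A subgraph $H\subseteq G$ realising $\pi$ with the fixed order (that is, $d_H(v)=\pi(v)$ for every $v$) corresponds exactly to an integral flow in $N$ saturating every arc leaving $s$ and every arc entering $t$: the unit arcs carrying flow are the edges of $H$, saturation of $s\to a$ encodes $d_H(a)=\pi(a)$, and saturation of $b\to t$ encodes $d_H(b)=\pi(b)$. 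Since $\pi$ is bigraphic we have $\pi(A)=\pi(B)$, so such a saturating flow is exactly a flow of value $\pi(A)$; by the integrality of maximum flows on integer-capacitated networks it suffices to produce a (possibly fractional) flow of this value.

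Next I would invoke the max-flow--min-cut theorem: a flow of value $\pi(A)$ exists if and only if every $s$--$t$ cut has capacity at least $\pi(A)$. An arbitrary cut $(S,T')$ is specified by choosing $X=A\cap T'\subseteq A$ on the sink side and $Y=B\cap S\subseteq B$ on the source side. Counting the three kinds of cut arcs, its capacity equals
\[
\pi(X)+\pi(Y)+e_G(A\setminus X,\overline{Y}),
\]
because the cut arcs $s\to a$ are those with $a\in X$, the cut arcs $b\to t$ are those with $b\in Y$, and the cut unit arcs are exactly the edges of $G$ from $A\setminus X$ to $B\setminus Y=\overline{Y}$. Demanding this to be at least $\pi(A)$ and cancelling $\pi(X)$, the min-cut condition becomes
\[
\pi(A\setminus X)\leq \pi(Y)+e_G(A\setminus X,\overline{Y})\qquad\forall X\subseteq A,\ \forall Y\subseteq B.
\]

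Finally I would match this with the hypothesis. Writing $X'=A\setminus X$ (which ranges over all subsets of $A$) and replacing $Y$ by its complement $\overline{Y}$ (which again ranges over all subsets of $B$), the displayed min-cut inequality reads $\pi(X')\leq \pi(\overline{Y})+e_G(X',Y)$ for all $X'\subseteq A$ and $Y\subseteq B$, which is verbatim the assumed inequality $\pi(X)\leq e_G(X,Y)+\pi(\overline{Y})$. Hence every cut has capacity at least $\pi(A)$, a saturating integral flow exists, and the corresponding $H$ is the sought embedding. The one delicate point is this bookkeeping: one must orient the cut so that the source and sink sides land on the correct vertex classes and check that the complementation $Y\leftrightarrow\overline{Y}$ turns the min-cut inequality into exactly the hypothesis rather than a superficially similar variant. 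A convenient sanity check that the translation is correct is that the condition is also necessary — for instance, taking $X=\{a\}$ and $Y=B$ forces $\pi(a)\le d_G(a)$, as it must for any embedding to exist.
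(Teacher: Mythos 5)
Your proof is correct, but there is nothing in the paper to compare it against: the paper states this lemma as a known result (the Gale--Ryser condition, in the form of Exercise 16 of Chapter 7 of Lov\'asz's book) and uses it as a black box without proof. Your network-flow argument is the standard proof of exactly this statement. The correspondence between degree-prescribed subgraphs of $G$ and integral flows saturating all source and sink arcs is right; the cut-capacity computation $\pi(X)+\pi(Y)+e_G(A\setminus X,\overline{Y})$ is right; and the double substitution $X\mapsto A\setminus X$, $Y\mapsto\overline{Y}$ is a bijection on pairs of subsets, so the min-cut condition really is verbatim the hypothesis and no cases are lost. The only facts you use beyond the hypothesis are $\pi(A)=\pi(B)$ (which follows from $\pi$ being bigraphic) and integrality of maximum flows on integer-capacitated networks, and you invoke both explicitly. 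So the argument is complete and self-contained, which is arguably a small improvement on the paper's citation-only treatment.
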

For more results in this field, we refer the reader to the monography on factor theory of Yu and Liu \cite{yu}.

\section{Main result}

\begin{theorem}
\label{1}
For every $\eps\in(0,\frac{1}{2})$ there is an $n_0=n_0(\eps)$ such that if $n>n_0$, and $G(A,B)$ and $H(S,T)$ are bipartite graphs with $|A|=|B|=|S|=|T|=n$ and the following conditions hold, then $H\subseteq G$.

\begin{enumerate}
\setlength\itemindent{15ex}
	\item[\emph{Condition 1:}] \label{1:1} $d_G(x) > \(\frac{1}{2}+\eps\)n$ holds for all $x\in A\cup B$
	\item[\emph{Condition 2:}] $d_H(x) < \frac{\eps^4}{100}\frac{n}{\log n}$ holds for all $x\in S$,
	\item[\emph{Condition 3:}] $d_H(y)=1$ holds for all $y\in T$.
\end{enumerate}
\end{theorem}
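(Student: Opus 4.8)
The plan is to exploit that Condition~3 forces $H$ to be a \emph{star forest}: its centres are the vertices $s_1,\dots,s_k\in S$ of positive degree, with $d_i:=d_H(s_i)$, every vertex of $T$ is a leaf, and $\summ_{i} d_i=|T|=n$. An embedding $H\subseteq G$ then amounts to a pair of bijections $S\to A$ and $T\to B$ such that each leaf of $s_i$ is sent to a $G$-neighbour of the image of $s_i$. I would first fix arbitrary distinct images $a_1,\dots,a_k\in A$ of the centres (the isolated vertices of $S$ may be sent to the remaining vertices of $A$ with no constraint), and only afterwards look for the bijection on the leaves.

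To produce that bijection, the idea is to build an auxiliary balanced bipartite graph $\Gamma$ on parts $T$ and $B$, each of size $n$, joining a leaf $t$ (a child of $s_i$) to $b\in B$ exactly when $b\in N_G(a_i)$, where $N_G(v)$ denotes the neighbourhood of $v$. A perfect matching of $\Gamma$ is precisely a valid leaf-bijection, so it suffices to guarantee one. Here I would invoke the elementary fact (immediate from Hall's theorem) that a balanced bipartite graph on parts of size $n$ with minimum degree at least $n/2$ has a perfect matching. On the $T$-side this holds automatically, since $d_\Gamma(t)=|N_G(a_i)|>\(\tfrac12+\eps\)n$ by Condition~1. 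Thus the whole task reduces to arranging that $d_\Gamma(b)=\summ_{i:\,a_i\in N_G(b)} d_i\ge n/2$ for every $b\in B$.

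To secure this I would choose the centre-images by a uniformly random injection of $\{s_1,\dots,s_k\}$ into $A$. For a fixed $b$, the quantity $d_\Gamma(b)=\summ_i d_i\,\mathbf 1[a_i\in N_G(b)]$ is a sum of terms each bounded by $d_i<\frac{\eps^4}{100}\frac{n}{\log n}$ (Condition~2) with mean $|N_G(b)|>\(\tfrac12+\eps\)n$ (Condition~1). Since $\summ_i d_i^2<\frac{\eps^4}{100}\frac{n^2}{\log n}$, a Chernoff--Hoeffding bound for sampling without replacement gives that $d_\Gamma(b)$ falls below $n/2$ — a downward deviation exceeding $\eps n$ — with probability at most $\exp\!\(-\tfrac{200\log n}{\eps^2}\)=n^{-200/\eps^2}$. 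A union bound over the $n$ choices of $b$ leaves a failure probability of at most $n^{\,1-200/\eps^2}$, which tends to $0$; hence for $n>n_0(\eps)$ there is a placement with $d_\Gamma(b)\ge n/2$ for all $b$, so $\Gamma$ has minimum degree at least $n/2$ on both sides, the matching exists, and with it the embedding $H\subseteq G$.

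The step I expect to be the real crux is precisely the manoeuvre that keeps the union bound polynomial. The naive route — placing the centres and then verifying the exact Gale--Ryser/Hall condition of \autoref{4}, namely $\pi(X)\le|N_G(X)|$ for \emph{all} $X\subseteq A$ — must range over an exponential family of sets, whereas the degree cap $\frac{n}{\log n}$ yields only polynomial concentration $n^{-O(1/\eps^2)}$, so the union bound collapses for mid-sized $X$. Replacing that exact condition by the \emph{sufficient} minimum-degree-$n/2$ condition in $\Gamma$ turns it into a per-vertex statement over merely $n$ events, which is exactly what the hypothesis can afford; the surplus exponent $\eps^4$ (rather than $\eps^2$) is what supplies the margin $200/\eps^2>1$. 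The only remaining care is routine: justifying the concentration inequality under sampling without replacement (via a Hoeffding--Serfling estimate, or by placing the centres independently and discarding the rare collisions) and tracking the constants into $n_0(\eps)$.
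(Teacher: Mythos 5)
Your proof is correct, and it takes a genuinely different and considerably more direct route than the paper. The paper never exploits the global structure that Condition~3 imposes; instead it partitions $S$ into $O(\log n)$ near-regular degree classes, embeds the small classes greedily, pairs each large class $D_i$ with a random block $E_i\subseteq B$, and verifies the Gale--Ryser condition of \autoref{4} for each pair through the five-case analysis of \autoref{5}, with Azuma--Hoeffding supplying the degree conditions between $D_i$ and $E_i$. You observe that $H$ is a star forest, so after placing the centres the embedding is exactly a perfect matching in your auxiliary graph $\Gamma$, and you replace the exponential-family Hall/Gale--Ryser verification by the sufficient minimum-degree-$n/2$ criterion --- precisely the simplification you identify as the crux. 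This collapses the whole argument to $n$ per-vertex concentration events and eliminates both the partition of $S$ and the case analysis of \autoref{5}; as a bonus your computation only needs $\max_x d_H(x)=O(\eps^2 n/\log n)$ rather than the $\eps^4$ of Condition~2. What the paper's route buys in exchange is the intermediate \autoref{5}, an embedding lemma for near-regular star forests into dense bipartite graphs that is reusable on its own, and a template (random blocks plus Gale--Ryser) that does not hinge on every $T$-degree being exactly $1$ in the same all-or-nothing way your matching reduction does.

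Two small points of care, neither fatal. First, the concentration you need is for the \emph{weighted} sum $\summ_i d_i\mathbf 1[a_i\in N_G(b)]$ under a random injection, which is not literally Hoeffding's unweighted sampling-without-replacement theorem; the clean fix is the exposure martingale revealing $a_1,a_2,\dots$, whose $j$-th difference is at most $d_j+\frac{\summ_{i>j}d_i}{n-j}\le d_j+1$ (using $d_i\ge 1$, hence $\summ_{i>j}d_i\le n-j$), so Azuma gives $\exp\(-c\eps^{-2}\log n\)$ with a smaller constant $c$ that is still comfortably larger than $1$. Second, your fallback of placing centres independently and ``discarding the rare collisions'' does not survive when the number $k$ of centres is close to $n$ (e.g.\ when most stars are single edges), since collisions are then abundant; stick with the martingale or a Serfling-type bound.
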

We prove \autoref{1} in the next section. First we indicate why we have the bounds in Conditions 1 and 2.

Condition 1 of \autoref{1} is necessary. Suppose that $\frac{n}{2}-1 < d_G(x)$. That allows $G = K_{\frac{n}{2}+1,\frac{n}{2}-1}\cup K_{\frac{n}{2}-1,\frac{n}{2}+1}$. For all $\eps>0$ there is an $n_0$ such that if $n>n_0$ degrees are higher than $\left(\frac{1}{2}-\eps\right)n$, but there is no perfect matching (i.e. 1-factor) in the graph.

Condition 2 is necessary as well. To show it, we give an example. Let $G = G(n,n,p)$ a random bipartite graph with $p>0.5$ and vertex class sizes of $n$. Let $H(S,T)$ be the following bipartite graph: each vertex in $T$ has degree $1$. In $S$ all vertices have degree 0, except $\frac{\log n}{c}$ vertices with degree $\frac{cn}{\log n}$. The graph $H$ cannot be embedded into $G$, which follows from the example of Koml\'os et al. \cite{komlos}

%
%
%
%
%
%
%
%
%

Before proving \autoref{1} we compare our main theorem with the previous results.

\begin{remark}
There are graphs which can be packed using \autoref{1}, but not with \autoref{0:1}.
\end{remark}

Indeed, $\Delta_1 >\frac{n}{2} $ and we can choose $\Delta_2 >1$. Thus, $\Delta_1\Delta_2>\frac{n}{2}$. However, with \autoref{1} we can pack $G$ and $H$.

\begin{remark}
There are graphs which can be packed using \autoref{1}, but not with \autoref{0:2}.
\end{remark}

Let $\pi_1 = \pi(H)$ and $\pi_2 = \pi(\overline{G})$.

$\delta_1 = 1$ and $\Delta_1\leq \frac{n}{100\log n}$.

If $\Delta_2 \approx \frac{n}{2}$, then $\Delta_2+2 \geq \Delta_1+\delta_1$.

Furthermore, \begin{equation}
(\Delta_2+1)(\Delta_1+\delta_1) \approx \frac{n}{2}\cdot \frac{n}{c\log n} \gg n.
\end{equation}
Although the conditions of \autoref{0:2} are not satisfied, $\pi_1$ and $\pi_2$ still pack.

\begin{remark}
There are graphs which can be packed using \autoref{1}, but not with \autoref{0:3}.
\end{remark}

Let $\pi_1 = \pi(H)$ and $\pi_2 = \pi(\o G)$, as above. The conditions of \autoref{0:3} are not satisfied, however, \autoref{1} gives a packing of them.

As it is transparent, our main theorem can guarantee packings in cases, that were far beyond reach by the previous tecniques.

\section{Proof}

We formulate the key technical result for the proof of \autoref{1} in the following lemma.

\begin{lemma}
		\label{5}
		Let $\eps$ and $c$ such that in \autoref{1}. Let $G$ and $H$ be bipartite graphs with classes $Z$ and $W$ of sizes $z$ and $n$, respectively, where $z>\frac{2}{\eps}$.
		
		Suppose that
		\begin{enumerate}
			\item $d_G(x)>\(\frac{1}{2}+\eps\)n$ for all $x\in Z$ and
			\item $d_G(y)>\(\frac{1}{2}+\frac{\eps}{2}\)z$ for all $y\in W$.
		    \newcounter{enumTemp}
    		\setcounter{enumTemp}{\theenumi}
		\end{enumerate}
		Assuming
		\begin{enumerate}
		    \setcounter{enumi}{\theenumTemp}
			\item There is an $M\in \mathbb{N}$ and with $\delta\leq \frac{\eps}{10}$ we have 
			$$M\leq d_H(x)\leq M(1+\delta)~\forall x\in Z,$$
			and
			\item $d_H(y) = 1~\forall y\in W$.
		\end{enumerate}
		Then there is an embedding of $H$ into $G$.
\end{lemma}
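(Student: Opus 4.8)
The plan is to derive \autoref{5} from the Gale--Ryser type embedding criterion \autoref{4}. The first observation is structural: since every vertex of $W$ has $H$-degree exactly $1$, the graph $H$ is a disjoint union of stars whose centres lie in $Z$, and its isomorphism type is completely determined by the multiset of star sizes, i.e. by $\pi(H)$ restricted to $Z$. Hence embedding $H$ into $G$ is the same as realising the bigraphic sequence $\pi:=\pi(H)$ as a subgraph of $G$ with a fixed order, where we give every vertex of the $W$-class of $G$ the degree $1$ and distribute the star sizes of $H$ over the $Z$-class of $G$ in an arbitrary order. It therefore suffices to check the hypothesis of \autoref{4}, namely
\[
\pi(X)\leq e_G(X,Y)+\pi(\o Y)\qquad\forall X\subseteq Z,\ \forall Y\subseteq W .
\]

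Because all $W$-degrees equal $1$ we have $\pi(\o Y)=n-|Y|$, and since $e(H)=n$ together with $d_H(x)\geq M$ forces $M\leq n/z$, every $x\in Z$ satisfies $d_H(x)\leq M(1+\delta)\leq\frac nz(1+\delta)$; in particular $\pi(X)\leq |X|\frac nz(1+\delta)$ for \emph{every} $X$, so the choice of order is immaterial. Writing $a=|X|$ and $b=|Y|$, the inequality to be verified reduces to
\[
a\c\frac nz(1+\delta)\ \leq\ e_G(X,Y)+n-b .
\]
The key quantity is a lower bound on $e_G(X,Y)$, which I would obtain by counting non-edges from whichever side is cheaper: each $x\in Z$ misses fewer than $\(\tfrac12-\eps\)n$ vertices of $W$ and each $y\in W$ misses fewer than $\(\tfrac12-\tfrac\eps2\)z$ vertices of $Z$, so
\[
e_G(X,Y)\ >\ ab-\min\left\{a\(\tfrac12-\eps\)n,\ b\(\tfrac12-\tfrac\eps2\)z\right\}.
\]

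From here the argument is a case analysis on $a$ and $b$. When $b\leq n-\pi(X)$ the required inequality is immediate from $e_G(X,Y)\geq 0$, so one may assume $b$ is large; and when $a$ is small (say $a=O(1)$) the term $n-b$ alone already dominates $\pi(X)$. The delicate regime is $a$ large and $b$ near $\(\tfrac12\pm\eps\)n$: there one must use the $W$-side non-edge bound $b\(\tfrac12-\tfrac\eps2\)z$ rather than the $Z$-side one, after which the leading term $ab$ is of order $zn$ while the competing quantities $\pi(X)$, $b$ and $n$ are only of order $n$.

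I expect this last regime to be the main obstacle: the whole estimate rests on showing that the $zn$-scale surplus always overwhelms the $n$-scale deficit. This is exactly where the hypotheses $z>\frac2\eps$ and $\delta\leq\frac\eps{10}$ are consumed: the former guarantees $\(\tfrac12+\tfrac\eps2\)z\geq 1$, so that even at $X=Z$, $b=\(\tfrac12-\eps\)n$ the bound $e_G(Z,Y)>\(\tfrac12-\eps\)\(\tfrac12+\tfrac\eps2\)zn$ clears the threshold $\(\tfrac12-\eps\)n$, while the latter keeps $\frac nz(1+\delta)$ small enough that the degree surplus of $G$ absorbs $\pi(X)$ in the intermediate range. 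Once the inequality is confirmed in every regime, \autoref{4} yields an embedding of $\pi(H)$ into $G$ with a fixed order, and by the structural remark this embedding is a copy of $H$ itself, completing the proof.
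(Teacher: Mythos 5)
Your overall strategy coincides with the paper's: reduce the embedding to the Gale--Ryser type criterion of \autoref{4} (after observing that $H$ is a disjoint union of stars, so that realising its degree sequence realises $H$ itself --- a point the paper leaves implicit), bound $d_H(X)\leq |X|\frac{n}{z}(1+\delta)$ via $Mz\leq n$, and lower-bound $e_G(X,Y)$ by counting non-neighbours from whichever side of the bipartition is cheaper. Your estimate $e_G(X,Y)>ab-\min\bigl\{a\bigl(\tfrac{1}{2}-\eps\bigr)n,\ b\bigl(\tfrac{1}{2}-\tfrac{\eps}{2}\bigr)z\bigr\}$ is exactly the bound the paper deploys inside its five cases.

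There is, however, a genuine gap: the case analysis, which is essentially the entire content of the paper's proof, is announced but not carried out. You verify only the single corner point $X=Z$, $|Y|=\bigl(\tfrac{1}{2}-\eps\bigr)n$ and then assert that the degree surplus of $G$ ``absorbs $\pi(X)$ in the intermediate range.'' That intermediate range is precisely where the work lies: the paper's cases (b), (c) and (e) introduce the parameters $\phi,\psi,\psi_0$ and consume the hypotheses $\delta\leq\frac{\eps}{10}$ and $z>\frac{2}{\eps}$ in several delicately balanced inequalities such as \eqref{eq1} and \eqref{eq2}, none of which is reproduced or replaced by an alternative estimate in your sketch. Moreover, one of your two ``easy'' cases is wrong as stated: for small $a$ the claim that ``$n-b$ alone already dominates $\pi(X)$'' fails when $Y=W$, since then $n-b=0$ while $\pi(X)\geq M\geq 1$; even that regime needs the edge term $e_G(X,Y)$. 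The skeleton is correct and completable --- finishing it would essentially reproduce the paper's argument --- but as written the proof of the key inequality in the non-trivial regimes is missing.
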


\begin{proof}
	We show that the conditions of \autoref{4} are satisfied.

	Let $X\subseteq Z$, $Y\subseteq W$. We have five cases to consider depending on the size of $X$ and $Y$.
	
	In all cases we will use the obvious inequality $Mz\leq n$, as $d_H(X) = d_H(Y)$. For sake of simplicity, we use $e(X,Y)=e_G(X,Y)$.
\begin{enumerate}[(a)]
\item
	$|X|\leq\frac{z}{2(1+\delta)}$ and $|Y|\leq\frac{n}{2}$.
	
	We have
	
	\begin{equation}
	d_H(X)\leq M(1+\delta)|X| \leq M(1+\delta)\frac{z}{2(1+\delta)} = \frac{Mz}{2}\leq \frac{n}{2} \leq |\overline{Y}|=d_H\(\overline{Y}\).
	\end{equation}
	
\item $|X|\leq\frac{z}{2(1+\delta)}$ and $|Y|>\frac{n}{2}$.

	Let $\phi = \frac{|Y|}{n}-\frac{1}{2}$, so $|Y| = \(\frac{1}{2}+\phi\)n$. Obviously, $0\leq\phi\leq\frac{1}{2}$.
	
	Therefore, $d_H(\o Y)=|\o Y| = \(\frac{1}{2}-\phi\)n$.
	
	Since $d_H(X)\leq \frac{n}{2}$, as we have seen above, furthermore,
	
	\begin{equation}
	e(X,Y)\geq (\eps + \phi)n|X|\geq (\eps+ \phi)n,
	\end{equation}
	we obtain $d_H(X)\leq d_H(\o Y)+e_G(X,Y)$.
	
\item $\frac{z}{2}\geq|X|>\frac{z}{2(1+\delta)}$ and $|Y|\leq\frac{n}{2}$.
	\label{case_c}

	Let $\psi = \frac{|X|}{z}-\frac{1}{2(1+\delta)}$, hence, $|X| = \(\frac{1}{2(1+\delta)}+\psi\)z$. Let $\psi_0 = \frac{\delta}{2(1+\delta)}=\frac{1}{2}-\frac{1}{2(1+\delta)}$, so $\psi\leq \psi_0$. This means that $|X|=\(\frac{1}{2}-\psi_0+\psi\)z$.
	
	As $0<\delta\leq\frac{\eps}{10}$, we have $\psi_0<\frac{\delta}{2}\leq \frac{\eps}{20}$.
	
	Let $\phi = \frac{1}{2}-\frac{|Y|}{n}$, so $|Y| = \(\frac{1}{2}-\phi\)n$. As $|Y|\leq\frac{n}{2}$, this gives $0\leq \phi\leq \frac{1}{2}$.
	
	\begin{enumerate}[(1)]
	\item $d_H(\o Y)=|\o Y| = n\(\frac{1}{2}+\phi\)$
	\item As above, $d_H(X)\leq M(1+\delta)|X| = Mz(1+\delta)\(\frac{1}{2(1+\delta)}+\psi\)\leq n(1+\delta)\(\frac{1}{2(1+\delta)}+\psi\)$.
	\item We claim that $e(X,Y)\geq |Y|\(\frac{\eps}{2}-\psi_0+\psi\)z$. Indeed, the number of neighbours of a vertex $y\in Y$ in $X$ is at least $\(\frac{\eps}{2}+\psi-\psi_0\)z$, considering the degree bounds of $W$ in $H$.
	\end{enumerate}

	We show $d_H(X)\leq e(X,Y) + d_H(\o Y)$.
	
	It follows from
	
	\begin{equation}
	n(1+\delta)\(\frac{1}{2(1+\delta)} + \psi\) \leq n\(\frac{1}{2}-\phi\)\(\frac{\eps}{2}-\psi_0+\psi\)z + n\(\frac{1}{2}+\phi\).
	\end{equation}
	
	This is equivalent to
		
	\begin{equation}
	\psi + \delta\psi\leq z\(\frac{1}{2}-\phi\)\(\frac{\eps}{2}+\psi-\psi_0\) + \phi.
	\label{eq1}
	\end{equation}
	
	The left hand side of \eqref{eq1} is at most $\psi_0 + \delta\psi_0 \leq \frac{\delta}{2}+\frac{\delta^2}{2}\leq \delta$, as $\delta\leq\eps\leq\frac{1}{2}$.
	
	If $\phi > \delta$, \eqref{eq1} holds, since $\frac{\eps}{2}+\psi-\psi_0\geq 0$, using $\psi_0\leq\frac{\eps}{20}$.
	
	Otherwise, if $\phi\leq \delta$, the right hand side of \eqref{eq1} is
	
	\begin{equation}
	z\(\frac{1}{2}-\phi\)\(\frac{\eps}{2}+\psi-\psi_0\) \geq \(\frac{1}{2}-\delta\)\(\frac{\eps}{2}-\frac
	{\delta}{2}\)z.
	\end{equation}
	We also have
	\begin{equation}
	\frac{\eps}{4}+\frac{\delta^2}{2}-\frac{\delta\eps}{2}-\frac{\delta}{4} > \delta,
	\end{equation}
	since
	\begin{equation}
	\eps + 2\delta^2-2\delta\eps > \eps - 2\frac{\eps^2}{10}>\frac{\eps}{2}>5\delta,
	\end{equation}
	using $\delta\leq \frac{\eps}{10}$.
	
	This completes the proof of this case.

\item $|X|>\frac{z}{2}$ and $|Y|\leq\frac{n}{2}$.

		We have
		
		\begin{enumerate}[(1)]
		\item $d_H(X)= d_H(Z)-d_H(\o X) = n - d_H(\o X) \leq n - M|\o X|,$
		\item $d_H(\o Y) = n- |Y|$ and
		\item $e(X,Y) \geq |Y|\(|X|-\frac{z}{2}+\frac{\eps z}{2}\)$, using to the degree bound on $Y$.
		\end{enumerate}
		
		All we have to check is whether
		
		\begin{equation}
		n - M|\o X| \leq n - |Y| + |Y|\(|X|-\frac{z}{2} + \frac{\eps z}{2}\)
		\end{equation}
		It is equivalent to
		\begin{equation}
		0 \leq |Y|\(|X|-\frac{z}{2}+\frac{\eps z}{2}-1\)+M\(z-|X|\)
		\label{eq4}
		\end{equation}
		\eqref{eq4} has to be true for any $Y$ and $M$. Specially, with $|Y|=M=1$, \eqref{eq4} has the following form:
		
		\begin{equation}
		0\leq |X| - \frac{z}{2} + \frac{\eps z}{2}-1 + z - |X| = \frac{z}{2}+\frac{\eps z}{2}-1.
		\label{eq5}
		\end{equation}
		\eqref{eq5} is true if $z\geq 2$.
		
		If $z = 1$, then $Z=\{v\}$ is only one vertex, which is connected to each vertex in $W$. In this case, \autoref{5} is obviously true.
		
\item $|X|>\frac{z}{2(1+\delta)}$ and $|Y|>\frac{n}{2}$.
	
	Let $\psi = \frac{|X|}{z}-\frac{1}{2(1+\delta)}$, hence, $|X| =z\(\frac{1}{2(1+\delta)}+\psi\)$. Let $\psi_0 = \frac{\delta}{2(1+\delta)}$, as it was defined in Case \eqref{case_c}. Again, $\psi_0\leq\frac{\delta}{2}$. We have $0\leq\psi\leq\frac{1}{2}+\psi_0\leq \frac{1+\delta}{2}$.
	
	Let $\phi = \frac{|Y|}{n}-\frac{1}{2}$, hence, $|Y| = n\(\frac{1}{2}+\phi\)$.
	 
	We have
	
	\begin{enumerate}[(1)]
	\item $d_H(X)\leq zM(1+\delta)\(\frac{1}{2(1+\delta)}+\psi\)\leq n (1+\delta)\(\frac{1}{2(1+\delta)}+\psi\),$
	\item $d_H(\o Y) = n\(\frac{1}{2}-\phi\)$ and
	\item $e(X,Y)\geq z\(\frac{1}{2(1+\delta)}+\psi\)(\phi+\eps)n.$
	\end{enumerate} 
	From the above it is sufficient to show that
	
	\begin{equation}
	n(1+\delta)\(\frac{1}{2(1+\delta)}+\psi\)\leq n\(\frac{1}{2}-\phi\) + z\(\frac{1}{2(1+\delta)+\psi}\)(\phi+\eps)n.
	\end{equation}
	It is equivalent to
	
	\begin{equation}
	\psi(1+\delta)\leq -\phi + z\(\frac{1}{2(1+\delta)}+\psi\)(\phi + \eps).
	\label{eq2}
	\end{equation}
	
	Using $\psi\leq\frac{1+\delta}{2}$ and $\delta\leq\frac{\eps}{10}$, the left hand side of \eqref{eq2} is at most
	
	\begin{equation}
	\frac{1+\delta}{2}(1+\delta) = \frac{1}{2}+\delta+\frac{\delta^2}{2}\leq \frac{1}{2}+\frac{\eps}{10}+\frac{\eps^2}{200}\leq\frac{1}{2}+\frac{1}{10}=\frac{3}{5},
	\end{equation}
	as $\eps\leq\frac{1}{2}$.
	
	The right hand side of \eqref{eq2} is
	
	\begin{equation}
	\phi\frac{z-2(1+\delta)}{2(1+\delta)} + \frac{z}{2(1+\delta)}\eps + z\psi(\phi+\eps)
	\label{eq3}
	\end{equation}
	
	The first and the last term of \eqref{eq3} is always positive. (We use that $z>3$.) Therefore, \eqref{eq3} is at least $\frac{z}{2(1+\delta)}\eps$.
	
	It is enough to show that
	
	\begin{equation}
	\frac{3}{5} \leq \frac{z}{2(1+\delta)}\eps.
	\end{equation}
	This is true indeed, since $\eps>\frac{2}{z}$ and $\delta\leq\frac{\eps}{10}\leq\frac{1}{20}$.
	
	We have proved what was desired.

\end{enumerate}
	
\end{proof}

\begin{proof} (\autoref{1})
First, form a partition $C_0,C_1,\ldots,C_k$ of $S$ in the graph $H$. For $i>0$ let $u\in C_i$ if and only if $\frac{\eps^4}{100}\frac{n}{\log n}\c\frac{1}{(1+\delta)^{i-1}}\geq d_H(u)>\frac{\eps^4}{100}\frac{n}{\log n}\c\frac{1}{(1+\delta)^{i}}$ with $\delta = \frac{\eps}{10}$. Let $C_0$ be the class of the isolated points in $S$. Note that the number of partition classes, $k$ is $\log_{1+\delta}n = \log_{1 + \frac{\eps}{10}} n =\frac{\log n}{\log \(1 + \frac{\eps}{10}\)} = c\log n$.


Now, we embed the partition of $S$ into $A$. Take a random ordering of the vertices in $A$. The first $|C_1|$ vertices of $A$ form $A_1$, the vertices $|C_1|+1,\ldots,|C_1|+|C_2|$ form $A_2$ etc., while $C_0$ maps to the last $|C_0|$ vertices. Obviously, $C_0$ can be always embedded.

We say that a partition class $C_i$ is \emph{small} if $|C_i|\leq \frac{16}{\eps^2}\log n$.

We claim that the total size of the neighbourhood in $B$ of small classes is at most $\frac{\eps n}{4}$. 

The size of the neighbourhood of $C_i$ is at most \begin{equation}
\frac{\eps^4}{100}\frac{n}{\log n}\c \frac{1}{(1+\delta)^{i-1}}\c\frac{16}{\eps^2}\log n.
\end{equation}
If we sum up, we have that the total size of the neighbourhood of small classes is at most

\begin{align}
\sum_{i=1}^{k}\frac{\eps^4}{100}\frac{n}{\log n}\c \frac{1}{(1+\delta)^{i-1}}\c\frac{16}{\eps^2}\log n = \frac{4}{25}\eps^2n\sum_{i=0}^{k-1}\frac{1}{(1+\delta)^i}\leq \nonumber\\
\leq\frac{4}{25}\eps^2n\frac{1+\delta}{\delta}\leq\frac{4}{25}\eps^2n\frac{3/20}{\eps/10}\leq \frac{\eps n}{4}.\phantom{mmmmmmmmmmm}
\end{align}
The vertices of the small classes can be dealt with using a greedy method: if $v_i$ is in a small class, choose randomly $d_H(v_i)$ of its neighbours, and fix these edges. After we are ready with them, the degrees of the vertices of $B$ are still larger than $\(\frac{1}{2}+\frac{\eps}{2}\)n$.

Continue with the large classes. Reindex the large classes $D_1,\ldots,D_\ell$ and form a random partition $E_1,\ldots,E_\ell$ of the unused vertices in $B$ such that $|E_i| = \summ_{u\in D_i} d_H(u)$. We will consider the pairs $(D_i,E_i)$.

We will show that the conditions of \autoref{5} are satisfied for $(D_i,E_i)$.

For this, we will use the Azuma--Hoeffding inequality.

We have to show that for any $i$ every vertex $y\in E_i$ has at least $\(\frac{1}{2}+\frac{\eps}{4}\)z$
neighbours in $D_i$ and every vertex $x\in D_i$ has at least $\(\frac{1}{2}+\frac{\eps}{2}\)z$ in $E_i$.

Then we apply \autoref{5} with $\frac{\eps}{2}$ instead of $\eps$, and we have an embedding in each pair $(D_i,E_i)$, which gives an embedding of $H$ into $G$.

Let $|D_i| = z$. We know $z > \frac{16}{\eps^2} \log n$, as $D_i$ is large.

Build a martingale $\zz = \zz_0,\zz_1,\ldots,\zz_z$. Consider a random ordering $v_1,\ldots,v_z$ of the vertices in $Z$. Let $X_i=1$ if $v_i$ is a neighbour of $y$, otherwise, let $X_i = 0$. Let $\zz_i = \summ_{j=1}^i X_j$, and let $\zz_0 = 0$. This chain $\zz_i$ is a martingale indeed with martingale differences $X_i\leq 1$, which is not hard to verify.

According to the Azuma--Hoeffding inequality \cite{azuma, hoeffding} we have the following lemma:

\begin{lemma}[Azuma \cite{azuma}]
If $\zz$ is a martingale with martingale differences $1$, then for any $j$ and $t$ the following holds:
\label{azulemma}
\begin{equation}
\mathbb{P}\(\zz_j\geq\mathbb{E}\zz_j-t\)\geq 1 - e^{-\frac{t^2}{2j}}.
\end{equation}
\end{lemma}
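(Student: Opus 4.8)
The plan is to prove the inequality by the standard exponential-moment (Chernoff) method applied to the lower tail. Write $D_i = \zz_i - \zz_{i-1}$ for the martingale differences and let $\mathcal{F}_{i-1}$ be the $\sigma$-field generated by $\zz_0,\ldots,\zz_{i-1}$; the martingale property is exactly $\mathbb{E}[D_i \mid \mathcal{F}_{i-1}] = 0$, and ``differences $1$'' means $|D_i| \le 1$. Setting $Y_j = \zz_j - \mathbb{E}\zz_j = \summ_{i=1}^j D_i$, the assertion $\mathbb{P}(\zz_j \ge \mathbb{E}\zz_j - t) \ge 1 - e^{-t^2/(2j)}$ is equivalent, by taking complements, to the tail bound $\mathbb{P}(Y_j \le -t) \le e^{-t^2/(2j)}$. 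For any $\lambda > 0$, Markov's inequality applied to the nonnegative variable $e^{-\lambda Y_j}$ gives
\[
\mathbb{P}(Y_j \le -t) = \mathbb{P}\!\left(e^{-\lambda Y_j} \ge e^{\lambda t}\right) \le e^{-\lambda t}\,\mathbb{E}\!\left[e^{-\lambda Y_j}\right],
\]
so it suffices to bound the moment generating function $\mathbb{E}[e^{-\lambda Y_j}]$ and then optimize in $\lambda$.

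The heart of the argument --- and the step I expect to require the most care --- is a per-increment exponential bound: for any variable $X$ with $\mathbb{E}[X \mid \mathcal{F}] = 0$ and $|X| \le 1$ one has $\mathbb{E}[e^{\mu X} \mid \mathcal{F}] \le e^{\mu^2/2}$ for every real $\mu$. I would obtain this from convexity of $x \mapsto e^{\mu x}$ on the symmetric interval $[-1,1]$: bounding $e^{\mu x} \le \tfrac{1-x}{2}e^{-\mu} + \tfrac{1+x}{2}e^{\mu}$ by the chord through the endpoints and taking conditional expectations, the linear term drops out because $\mathbb{E}[X \mid \mathcal{F}] = 0$, leaving $\mathbb{E}[e^{\mu X}\mid\mathcal{F}] \le \cosh\mu$. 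The remaining inequality $\cosh\mu \le e^{\mu^2/2}$ then follows termwise by comparing the two Taylor series, since $(2n)! \ge 2^n\, n!$. This conditional moment-generating-function estimate is the one genuinely analytic ingredient; everything else is routine bookkeeping.

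With this bound in hand I would peel off the increments one at a time. Since $Y_{j-1}$ is $\mathcal{F}_{j-1}$-measurable and the bound $e^{\mu^2/2}$ is even in $\mu$, conditioning on $\mathcal{F}_{j-1}$ and applying the per-step estimate to $X = D_j$ with $\mu = -\lambda$ gives
\[
\mathbb{E}\!\left[e^{-\lambda Y_j}\right] = \mathbb{E}\!\left[e^{-\lambda Y_{j-1}}\,\mathbb{E}\!\left[e^{-\lambda D_j}\mid\mathcal{F}_{j-1}\right]\right] \le e^{\lambda^2/2}\,\mathbb{E}\!\left[e^{-\lambda Y_{j-1}}\right].
\]
Iterating down to $Y_0 = 0$ yields $\mathbb{E}[e^{-\lambda Y_j}] \le e^{j\lambda^2/2}$, whence $\mathbb{P}(Y_j \le -t) \le \exp(-\lambda t + j\lambda^2/2)$. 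Finally I would take $\lambda = t/j$, the minimizer of the exponent, which produces exponent $-t^2/(2j)$ and hence $\mathbb{P}(Y_j \le -t) \le e^{-t^2/(2j)}$; passing to complements gives the stated lower bound. Thus the only real work lies in the per-increment exponential estimate of the second paragraph, and the rest is the standard Chernoff telescoping and optimization.
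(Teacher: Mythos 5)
The paper gives no proof of this lemma at all --- it is quoted as a known result from Azuma and Hoeffding --- so the only thing to compare your argument against is the classical literature. Your proof is the standard exponential-moment (Chernoff) derivation and it is correct: the chord/convexity bound giving $\mathbb{E}[e^{\mu X}\mid\mathcal{F}]\le\cosh\mu$ for a centred variable with $|X|\le 1$, the termwise comparison $\cosh\mu\le e^{\mu^2/2}$ via $(2n)!\ge 2^n\,n!$, the telescoping of the conditional bounds down to $\mathbb{E}[e^{-\lambda Y_j}]\le e^{j\lambda^2/2}$, and the optimal choice $\lambda=t/j$ are exactly the ingredients of the classical proof, assembled in the right order. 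Two small remarks. First, the identification $Y_j=\zz_j-\mathbb{E}\zz_j=\sum_{i=1}^{j}D_i$ silently assumes that $\zz_0$ is deterministic; in general the centred quantity controlled by the increments is $\zz_j-\zz_0$, not $\zz_j-\mathbb{E}\zz_j$, so the lemma as stated in terms of $\mathbb{E}\zz_j$ is only correct under that assumption. This is harmless here, since the paper applies it with $\zz_0=0$, but it deserves a sentence. Second, you correctly read ``martingale differences $1$'' as the two-sided bound $|D_i|\le 1$, which is what the constant $2j$ in the exponent requires; a merely one-sided bound $D_i\le 1$ (which is the literal reading of the paper's phrase ``martingale differences $X_i\le 1$'') would not suffice for the conditional moment-generating-function estimate, so making the hypothesis explicit is a genuine improvement on the paper's statement rather than a deviation from it.
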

The conditional expected value $\mathbb{E}(\zz_z|\zz_0)$ is $\mathbb{E}\zz_z = \(\frac{1}{2}+\frac{3\eps}{4}\)z$.

\autoref{azulemma} shows that

\begin{equation}
\mathbb{P}\(\zz_z\geq\(\frac{1}{2}+\frac{\eps}{2}\)z\) \geq 1 - e^{-\frac{\eps^2z^2/4}{2z}} = 1-e^{-\eps^2 z / 8}.
\label{azueq}
\end{equation}
We say that a vertex $v\in E_i$ is \textbf{bad}, if it has less than $\(\frac{1}{2}+\frac{\eps}{2}\)z$ neighbours in $D_i$. \autoref{azulemma} means that a vertex $v$ is bad with probability at most $e^{-\eps^2 z/8}$. As we have $n$ vertices in $B$, the probability of the event that any vertex is $C$-bad is less than 
\begin{equation}
n\cdot e^{-\eps^2z/8}<\frac{1}{n},
\end{equation}
as $z>\frac{16}{\eps^2}\log n$.

Then we have that with probability $1-\frac{1}{n}$ no vertex in $E_i$ is bad. Thus, Condition (ii) of \autoref{5} is satisfied with probability 1 for any pair $(D_i,E_i)$.

Using \autoref{azulemma}, we can also show that each $x\in D_i$ has at least $\(\frac{1}{2}+\frac{\eps}{2}\)|E_i|$ neighbours in $E_i$ with probability 1.

Thus, the conditions of \autoref{5} are satisfied, and we can embed $H$ into $G$. The proof of \autoref{1} is finished.
\end{proof}

%
%
%
%
%
\clearpage
\begin{koszi}
I would like to thank my supervisor, B\'ela Csaba his patient help, without whom this paper would not have been written. I also express my gratitude to P\'eter L. Erd\H os and to P\'eter Hajnal thoroughly for reviewing and correcting the paper. This work was supported by T\'AMOP-4.2.2.B-15/1/KONV-2015-0006
\end{koszi}
\clearpage
\bibliography{On_the_bipartite_degree_sequence_packing_problem}
\bibliographystyle{amsplain}
\end{document}